\newcommand{ \PP }{{\mathbb P}}
\newcommand{ \CC }{{\mathbb C}}
\newcommand{ \ii }{{\mathcal I}}
\newcommand{ \oo }{{\mathcal O}}
\newtheorem{thm}{Theorem}[section]
 \newtheorem{cor}[thm]{Corollary}
 \newtheorem{lem}[thm]{Lemma}
 \newtheorem{prop}[thm]{Proposition}
 \theoremstyle{definition}
\newtheorem*{conj*}{Conjecture}
\newtheorem*{thm*}{Theorem}
 \theoremstyle{remark}
 \newtheorem{rem}[thm]{Remark}
  \newtheorem{nota}[thm]{Notation}
\newcommand{\rk}{\operatorname{rk}}
\begin{document}

\title{Progress on the symmetric Strassen conjecture}

\author[E. Carlini]{Enrico Carlini}
\address[E. Carlini]{Department of Mathematical Sciences, Politecnico di Torino, Turin, Italy}
\email{enrico.carlini@polito.it}
\address[E.Carlini]{School of Mathematical Sciences, Monash University, Clayton, Australia.}
\email{enrico.carlini@monash.edu}

\author[M.V. Catalisano]{Maria Virginia Catalisano}
\address[M.V. Catalisano]{DIME - Dipartimento di ingegneria meccanica, energetica, gestionale e dei trasporti, University of Genoa, Italy.}
\email{catalisano@diptem.unige.it}

\author[L. Chiantini]{Luca Chiantini}
\address[L. Chiantini]{Department of Information Engineering and Mathematics, University of Siena,Italy.}
\email{luca.chiantini@unisi.it}


\begin{abstract}
Let $F$ and $G$ be homogeneous polynomials in disjoint sets of variables. We prove that the Waring rank is additive, thus proving the symmetric Strassen conjecture, when either $F$ or $G$ is a power, or $F$ and $G$ have two variables, or either $F$ or $G$ has small rank.
\end{abstract}

\subjclass{}

 \maketitle

\thispagestyle{empty}

\section{Introduction}

In his famous result of 1969 Strassen showed that it is possible to multiply two $2\times 2$ matrices using seven basic operations rather than eight, see \cite{landsbergstrassen} for more on this. Using this fact, a better algorithm was produced to multiply matrices of any size and this was proved to have the best possible computational complexity by Winograd in \cite{Winograd}.

After Strassen's result, it was clear that even straightforward procedures can require fewer operations than expected.  In \cite{strassenconj} Strassen formulated his well known {\em additive conjecture} for bilinear maps: {\em Given bilinear maps $\phi,\psi$ and two pairs of matrices $A,B$, and $C,D$  the computational complexity of simultaneously computing $\phi(A,B)$ and $\psi(C,D)$ is the sum of the complexities of $\phi$ and $\psi$.} The conjecture stands open since its formulation in 1973, for some partial results see \cite{winograddirectsumconj}.

Strassen conjecture can be naturally stated in terms of tensors and the notion of tensor rank, see \cite{landsbergbook}. Note that an analogue of the additive conjecture for 
{\it approximate} complexity (border rank, in more recent terminology) does not hold (see \cite{Sch}).

We will focus on the relevant case of symmetric tensors, that is, the case of homogeneous polynomials, also known as forms. The {\em rank}, or {\em Waring rank}, of a degree $d$ form $F$ is $$\rk(F)=\min\{r : F=L_1^d+\ldots+L_r^d\mbox{ for linear forms }L_i\}.$$
We can now state the symmetric version of the Strassen Additivity Conjecture (SAC)
\begin{conj*} (SAC). Let $d>1$. If $F\in\mathbb{C}[x_0,\ldots,x_n]$ and $G\in\mathbb{C}[y_0,\ldots,y_m]$ are non-zero forms of degree $d$, then
$$ \rk(F+G) =\rk(F) + \rk (G). $$
\end{conj*}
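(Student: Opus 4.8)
The inequality $\rk(F+G)\le\rk(F)+\rk(G)$ is immediate by juxtaposing minimal decompositions of $F$ and of $G$, so the whole content of (SAC) is the reverse bound $\rk(F+G)\ge\rk(F)+\rk(G)$, which I would attack through apolarity. Work in $S=\CC[x_0,\dots,x_n,y_0,\dots,y_m]$ and let $T=\CC[X_0,\dots,X_n,Y_0,\dots,Y_m]$ be the dual ring acting on $S$ by differentiation; the Apolarity Lemma identifies $\rk(H)$ with the least degree of a reduced finite scheme $Z\subset\PP^{n+m+1}$ with $I_Z\subseteq H^\perp$. The first step is to make $(F+G)^\perp$ explicit. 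Splitting $T$ by bidegree in the $X$'s and the $Y$'s and using $\deg F=\deg G=d$, one gets, for $1\le t\le d-1$,
\[ (F+G)^\perp_t = F^\perp_t \oplus M_t \oplus G^\perp_t, \]
where $M_t$ is spanned by the degree-$t$ monomials involving both an $X$ and a $Y$; in degree $d$ there is exactly one further linear condition (forcing $\partial^{X}(F)=-\partial^{Y}(G)$ on the pure parts) and everything of degree $>d$ already lies in the ideal. In particular $(F+G)^\perp\cap\CC[X_0,\dots,X_n]=F^\perp$ and $(F+G)^\perp\cap\CC[Y_0,\dots,Y_m]=G^\perp$.

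Next, fix a minimal apolar reduced set $Z$ for $F+G$, so $|Z|=\rk(F+G)=:r$, and let $\Lambda_x=\{Y_0=\dots=Y_m=0\}\cong\PP^n$, $\Lambda_y=\{X_0=\dots=X_n=0\}\cong\PP^m$ be the complementary coordinate subspaces, with $\pi_x$ the projection of centre $\Lambda_y$ and $\pi_y$ the projection of centre $\Lambda_x$. For a finite reduced scheme the homogeneous ideal of the image of a projection is the elimination ideal, so $I_{\pi_x(Z)}=I_Z\cap\CC[X_0,\dots,X_n]\subseteq F^\perp$, whence $\rk(F)\le\deg\pi_x(Z)$ and, symmetrically, $\rk(G)\le\deg\pi_y(Z)$. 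Using the explicit shape of $(F+G)^\perp$, one also checks that a reduced set $Z=Z_x\sqcup Z_y$ with $Z_x\subset\Lambda_x$ and $Z_y\subset\Lambda_y$ is apolar to $F+G$ if and only if $Z_x$ is apolar to $F$ and $Z_y$ to $G$. Combining these, (SAC) becomes equivalent to the assertion that $F+G$ admits a minimal apolar reduced set contained in $\Lambda_x\cup\Lambda_y$: given such a set, $r=|Z_x|+|Z_y|\ge\rk(F)+\rk(G)$, whereas in general the two projections may collapse points and only deliver $\rk(F)\le r$ and $\rk(G)\le r$ separately.

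Thus the plan reduces to producing, from an arbitrary minimal apolar $Z$, one of length $\le r$ supported on $\Lambda_x\cup\Lambda_y$. I would first normalize so that $F$ and $G$ are concise (hence so is $F+G$, so that any minimal apolar $Z$ spans $\PP^{n+m+1}$), and then attempt a flat degeneration of $Z$ inside the locus of reduced length-$r$ schemes apolar to $F+G$, driving the points off $\Lambda_x\cup\Lambda_y$ onto it while keeping $I_\bullet\subseteq(F+G)^\perp$ and the length unchanged, with the Hilbert function of $Z$ controlled in degrees $<d$ by the direct-sum decomposition above. The main obstacle---and precisely why (SAC) is open in general, and why the analogous additivity for ordinary tensor rank is now known to fail---is that nothing guarantees such a degeneration exists: a minimal apolar set for $F+G$ can be genuinely transverse to the variable splitting, and a flat limit inside $\Lambda_x\cup\Lambda_y$ may reach length $<r$ only by becoming non-reduced, which does not bound Waring rank. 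The feasible part of the plan, and what I expect the paper to carry out, is to establish the equivalence above whenever $(F+G)^\perp$ is understood well enough to argue by hand: when one summand is a pure power $L^d$, where its apolar ideal is a complete intersection and the points off the corresponding $\Lambda$ can be analysed directly; when $F$ and $G$ are binary, where every apolar ideal involved is at most two-generated and the classical description of the rank of binary forms (Sylvester, Comas--Seiguer) applies; and when $\rk(F)$ (say) is small, via a finite case analysis putting $F$ in normal form and reducing to the power case.
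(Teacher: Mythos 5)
The statement you are asked to prove is given in the paper as a conjecture (SAC), not as a theorem: the paper offers no proof of it in this generality, and your proposal does not supply one either. You are candid about this, and your diagnosis of where the argument breaks is accurate, so let me record it as the genuine gap. After correctly computing $(F+G)^\perp_t=(F^\perp\cap\CC[X])_t\oplus M_t\oplus(G^\perp\cap\CC[Y])_t$ for $1\le t\le d-1$ and observing that SAC is equivalent to the existence of a minimal apolar reduced set supported on $\Lambda_x\cup\Lambda_y$, the step ``degenerate an arbitrary minimal apolar $Z$ onto $\Lambda_x\cup\Lambda_y$ while keeping it reduced and of the same length'' is precisely what nobody knows how to do: a flat limit can become non-reduced, and non-reduced apolar schemes do not bound the Waring rank (this is the cactus/border-rank phenomenon you allude to). So the proposal is a correct reduction plus an unproved assertion, i.e.\ not a proof. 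A smaller technical point: if $Z$ meets the centre $\Lambda_y$, the projection $\pi_x(Z)$ and the identification of its ideal with the elimination ideal need a word of care, though this does not affect the substance.

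Regarding the cases the paper actually settles, your guesses about the mechanism only partially match. The paper does not analyse the generators of $(F+G)^\perp$, nor does it invoke the Sylvester/Comas--Seiguer description of binary forms beyond the bound $\rk\le d$. Instead it assumes a shorter decomposition $F+G=H_1^d+\dots+H_t^d$ with $t<\rk(F)+\rk(G)$, forms the finite set $Z=Z(F)\cup Z(G)\cup Z(H)$ of all linear forms appearing on both sides, notes that the resulting linear dependence among $d$-th powers forces $h^1\ii_Z(d)>0$, and then contradicts this by Hilbert-function arguments: the Bigatti--Geramita--Migliore structure theorem (two values equal to $1$ in the first difference force many collinear points) together with Castelnuovo's residue/trace inequality to separate $Z$ in degree $d$. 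The one-variable case ($G=y_0^d$) is an elementary substitution argument, and the ``small rank'' case is really the hypothesis that $F$ essentially involves $\rk(F)$ variables, handled by induction adding one pure power at a time. Your apolarity framework is a reasonable alternative entry point for those special cases, but as written it does not carry any of them out, let alone the general conjecture.
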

Also the symmetric version of the conjecture stands open.
A relevant contribution to its study is the 2012 paper \cite{carcatgermonomi} where SAC is proved for the sum of (several) monomials.

In this paper we ontribute to the study of SAC by proving that the conjecture holds for one and two variables and for forms of small rank. More precisely, we prove the following theorem.

\begin{thm}\label{mainthm} Let $d>1$ and let $F\in\mathbb{C}[x_0,\ldots,x_n]$, $G\in\mathbb{C}[y_0,\ldots,y_m]$ be non-zero degree $d$ forms. If
$n=0\mbox{ or } m=0\mbox{ or } n,m=1$,
then $\rk(F+G) =\rk(F) + \rk (G)$.
\end{thm}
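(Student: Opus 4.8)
\emph{Proof plan.} The inequality $\rk(F+G)\le\rk(F)+\rk(G)$ is immediate from a pair of Waring decompositions, so the content is the reverse inequality, which I would prove via the Apolarity Lemma: it suffices to show that every reduced finite scheme $\Gamma$ with $I(\Gamma)\subseteq(F+G)^\perp$ satisfies $\deg\Gamma\ge\rk(F)+\rk(G)$. The structural input used throughout is that, because $F$ and $G$ lie in disjoint variables and $d>1$, a homogeneous operator of degree $t$ with $1\le t\le d-1$ annihilates $F+G$ exactly when its pure-$x$ part annihilates $F$ and its pure-$y$ part annihilates $G$ (every monomial involving both an $x$- and a $y$-derivative kills $F+G$ automatically). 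Hence, writing $A_P=R/P^\perp$ for the apolar algebra of a form $P$, with $R$ the dual polynomial ring,
\[
\dim(A_{F+G})_t=\dim(A_F)_t+\dim(A_G)_t\qquad(1\le t\le d-1),
\]
so the catalecticant lower bound gives $\rk(F+G)\ge\max_{1\le t\le d-1}(\dim(A_F)_t+\dim(A_G)_t)$.

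The cases $n=0$ and $m=0$ are symmetric; suppose $n=0$. Then $F=cx_0^d$ is a $d$-th power, $\rk(F)=1$ and $\dim(A_F)_t=1$ for all $t$, so the bound above reads $\rk(x_0^d+G)\ge 1+\max_t\dim(A_G)_t$, which already settles the claim whenever $\rk(G)$ equals its catalecticant bound. For general $G$, take a minimal reduced apolar scheme $\Gamma$ for $H=x_0^d+G$ and split the associated decomposition $H=\sum_P c_P\ell_P^d$, $\ell_P=a_Px_0+M_P$, according to whether $M_P=0$: if some $M_P=0$ (equivalently $[x_0]\in\Gamma$) then $G=\sum_{M_P\ne 0}c_PM_P^d$ gives $\rk(G)\le\deg\Gamma-1$; otherwise all $M_P\ne 0$, projection from $[x_0]$ shows $\{[M_P]\}$ is apolar to $G$, and the ``mixed'' relations $\sum_P c_Pa_P^k M_P^{d-k}=0$ $(0<k<d)$ together with $\sum_P c_Pa_P^d=1$ impose a non-trivial linear dependence among the $M_P$; restricting $G$ to a general line and invoking Sylvester's theorem on the resulting binary form then rules out $\deg\Gamma=\rk(G)$. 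The whole argument is organized as an induction on $\rk(G)$.

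For $n=m=1$ the forms $F,G$ are binary; by Sylvester, $F^\perp=(\phi_1,\phi_2)$ and $G^\perp=(\psi_1,\psi_2)$ are complete intersections with $\deg\phi_1+\deg\phi_2=\deg\psi_1+\deg\psi_2=d+2$, the function $t\mapsto\dim(A_F)_t$ is unimodal with maximum value $\deg\phi_1$ attained on an interval containing $\lfloor d/2\rfloor$ (and likewise for $G$), and $\rk(F)=\deg\phi_1$ if $F^\perp$ contains a squarefree form of degree $\deg\phi_1$ and $\rk(F)=\deg\phi_2$ otherwise. Since the two plateau intervals overlap, the displayed bound gives $\rk(F+G)\ge\deg\phi_1+\deg\psi_1$, which proves the conjecture in the non-defective case (both $\phi_1,\psi_1$ chosen squarefree). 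For the defective cases I would again use the splitting trick above — a reduced apolar scheme of $F+G$ with a point on the $x$-line $L_x=\{y_0=y_1=0\}$ or the $y$-line $L_y=\{x_0=x_1=0\}$ of $\PP^3$ leads to a strictly smaller instance, so one reduces to a $\Gamma$ with all points off $L_x\cup L_y$ — and then project $\Gamma$ from $L_y$ onto $L_x$ and from $L_x$ onto $L_y$; these are reduced apolar schemes for $F$ and for $G$, of degrees $\ge\rk(F)$ and $\ge\rk(G)$ respectively (when $F$ is defective, every form of degree $<\deg\phi_2$ in $F^\perp$ is a non-squarefree multiple of $\phi_1$, so no reduced apolar scheme for $F$ of degree below $\rk(F)=\deg\phi_2$ exists). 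The decisive remaining step is to bound $\deg\Gamma$ below by the sum of these two degrees, which amounts to showing that the generators of $(F+G)^\perp$ in degrees $\le d$ are rigid enough to force such an all-mixed reduced $\Gamma$ to be at least as large as a split configuration.

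I expect the defective regime to be the main obstacle: there $\rk(F)$ or $\rk(G)$ exceeds the maximal catalecticant rank, so the Hilbert-function estimate is not tight, and — since a non-reduced apolar scheme for $F+G$ can genuinely be smaller than $\rk(F)+\rk(G)$ — one must exploit the reducedness of $\Gamma$ by hand, arguing that the defectivity of $F$ (or of $G$) is incompatible with an apolar scheme for $F+G$ that is simultaneously reduced and smaller than $\rk(F)+\rk(G)$.
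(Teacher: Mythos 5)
Your catalecticant identity $\dim(A_{F+G})_t=\dim(A_F)_t+\dim(A_G)_t$ for $1\le t\le d-1$ is correct, and it does settle the binary case when both $F$ and $G$ are ``non-defective'' (rank equal to the maximal catalecticant), since the two plateaus overlap at $t=\lfloor d/2\rfloor$. But the proposal has a genuine gap exactly where the theorem is hard: the case $n=m=1$ with $F$ or $G$ defective, e.g.\ $F=x_0x_1^{d-1}$, whose rank $d$ far exceeds its maximal catalecticant $2$. For that regime you reduce everything to a ``decisive remaining step'' --- that an all-mixed reduced apolar scheme $\Gamma$ of $F+G$ must have $\deg\Gamma\ge\rk(F)+\rk(G)$ because the generators of $(F+G)^\perp$ are ``rigid enough'' --- and you offer no argument for it. The two projections of $\Gamma$ onto $R_F$ and $R_G$ only give $\deg\Gamma\ge\max(\rk F,\rk G)$, and nothing in your plan upgrades the maximum to the sum. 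This is precisely the content that the paper supplies by an entirely different mechanism: it forms the union $Z=Z(F)\cup Z(G)\cup Z(H)$ in $\PP^3$, observes that a decomposition of length $t<r+s$ forces $h^1\ii_Z(d)>0$, and then contradicts this by combining the Bigatti--Geramita--Migliore structure theorem (a Hilbert function with two consecutive values equal to $1$ forces many collinear points, Lemma \ref{celine}) with Castelnuovo's inequality (Lemma \ref{resid}) and a case analysis on whether $d$ points of $Z(H)$ are collinear. Your plan contains no substitute for this global argument, so the theorem is not proved.

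Two smaller soft spots. In the $n=0$ case, the subcase where all $M_P\ne 0$ is only sketched: a non-trivial relation $\sum_Pc_Pa_PM_P^{d-1}=0$ among $(d-1)$-st powers of a minimal apolar set is not by itself a contradiction (such relations do occur, e.g.\ for minimal decompositions of monomials), and ``restricting to a general line and invoking Sylvester'' is not a proof --- rank can drop under restriction. The paper's Proposition \ref{Marvi} closes this case cleanly by the explicit substitution $y_0=L_1/(\gamma-\alpha_1)$, which converts a length-$r$ decomposition of $x_0^d+F$ into a length-$(r-1)$ decomposition of $F$. Also, your reduction to a $\Gamma$ avoiding $L_x\cup L_y$ should be phrased as in the paper's Lemma \ref{noH}, by absorbing the pure-$x$ and pure-$y$ summands into $F$ and $G$ and controlling the ranks of the modified forms, rather than by an unexamined appeal to ``a strictly smaller instance.''
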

In Corollary \ref{Marvi2} we show that SAC holds even with no assumptions on $n$ and $m$ if one of the forms satisfies a condition on the rank (see the statement of the corollary).

We obtain Theorem \ref{mainthm}  using algebraic geometry. In particular, we exploit a deep knowledge of the Hilbert functions of finite sets of points. A similar approach was considered also in \cite{BaBe2012}.
We will use the strong structural result contained
in \cite{BGM}. Our techniques can be extended
 to study SAC in more than two variables, but we need some genericity assumptions on the forms $F$ and $G$; we are currently investigating this approach.

\section{Preliminaries}

In this section we state our notation and we recall the main results that we will use.

\begin{nota}
For a zero-dimensional scheme $Z\subset\PP^N$, we will denote with $h_Z$  the
Hilbert function of $Z$,  that is the function
which associates to any integer $n$ the dimension of the image 
of the map $H^0\oo_{\PP^N}(n)\to H^0\oo_Z(n)$. We will denote with $Dh_Z$ the first difference of the Hilbert function, that is $Dh_Z(n)=Dh_Z(n+1)-Dh_Z(n)$.
\end{nota}

\begin{nota}
In this paper, $F$ and $G$ will denote
forms of degree $d$, with 
 $d>1$,  in disjoint sets of variables. Namely, $F\in\mathbb{C}[x_0,\ldots,x_n]$ and $G\in\mathbb{C}[y_0,\ldots,y_m]$. 
\end{nota}

We will use the following formulation of SAC. Let $\rk(F)=r$, $\rk(G)=s$, and write $$F=F_1^d+\dots + F_r^d\mbox{ and }G=G_1^d+\dots +G_s^d,$$ for linear forms $F_i\in\mathbb{C}[x_0,\ldots,x_n] $ and $G_i\in\mathbb{C}[y_0,\ldots,y_m]$. With these notation SAC is equivalent to the fact that it is not possible to have
\begin{equation}\label{SAC}
 F_1^d+\dots + F_r^d +  G_1^d+\dots +G_s^d\quad = \quad H_1^d+\dots + H_t^d
 \end{equation}
where $t<r+s$ and the $H_i$ are linear forms in $\mathbb{C}[x_0,\ldots,x_n,y_0,\ldots,y_m]$.

We begin with a reduction step.

\begin{lem}\label{noH}
To prove  SAC as stated in  \eqref{SAC},  it is enough to consider
\[H_i\not\in \mathbb{C}[x_0,\ldots,x_n]\cup \mathbb{C}[y_0,\ldots,y_m] .\]
\end{lem}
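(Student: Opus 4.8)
The plan is to show that any expression of the form \eqref{SAC} with some of the $H_i$ lying in $\mathbb{C}[x_0,\ldots,x_n]$ (or symmetrically in $\mathbb{C}[y_0,\ldots,y_m]$) can be replaced by one in which no such $H_i$ appears, without increasing the total number of summands. Suppose we have
\[
F + G = H_1^d + \dots + H_t^d
\]
with $t < r+s$, and suppose that, after reordering, $H_1,\ldots,H_a\in\mathbb{C}[x_0,\ldots,x_n]$, $H_{a+1},\ldots,H_{a+b}\in\mathbb{C}[y_0,\ldots,y_m]$, and the remaining $H_{a+b+1},\ldots,H_t$ lie in neither. (Note a linear form lies in both sets only if it is zero, so the sets $\{H_1,\ldots,H_a\}$ and $\{H_{a+1},\ldots,H_{a+b}\}$ are disjoint.) The key observation is that the variables $x_0,\ldots,x_n$ and $y_0,\ldots,y_m$ are disjoint, so we can split the identity according to which monomials involve only $x$'s, only $y$'s, or a genuine mix.

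First I would make this splitting precise. Write $P = H_1^d+\dots+H_a^d \in \mathbb{C}[x_0,\ldots,x_n]$ and $Q = H_{a+1}^d + \dots + H_{a+b}^d \in \mathbb{C}[y_0,\ldots,y_m]$, and let $R = H_{a+b+1}^d + \dots + H_t^d$, where each $H_i$ in the last group is a linear form of the shape $\ell_i(x) + m_i(y)$ with both $\ell_i \neq 0$ and $m_i \neq 0$. Expanding $R = \sum_i (\ell_i + m_i)^d$ by the binomial theorem and collecting terms by $x$-degree, I would write $R = R_x + R_{\mathrm{mix}} + R_y$, where $R_x \in \mathbb{C}[x_0,\ldots,x_n]$ collects the pure $x$-terms (the coefficient of $m_i^0$, i.e.\ $\sum_i \ell_i^d$), $R_y$ collects the pure $y$-terms ($\sum_i m_i^d$), and $R_{\mathrm{mix}}$ contains all monomials divisible by at least one $x_j$ and at least one $y_k$. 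Comparing the $x$-pure, $y$-pure and mixed components of the identity $F + G = P + Q + R_x + R_{\mathrm{mix}} + R_y$, and using that $F$ is $x$-pure, $G$ is $y$-pure, and the left side has no mixed part, I get the three equations
\[
F = P + R_x, \qquad G = Q + R_y, \qquad 0 = R_{\mathrm{mix}}.
\]

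Next I would exploit $R_{\mathrm{mix}} = 0$. The vanishing of the mixed part forces a rigidity on the $H_i$ in the last group. Concretely, $R_{\mathrm{mix}} = \sum_i \sum_{k=1}^{d-1} \binom{d}{k} \ell_i^{d-k} m_i^{k} = 0$; looking at the terms with $k=1$ gives $\sum_i \ell_i^{d-1} m_i = 0$, at $k=2$ gives $\sum_i \ell_i^{d-2} m_i^2 = 0$, and so on. Since each $m_i$ is a nonzero linear form in the $y$'s and each $\ell_i^{d-k}$ is in the $x$'s, I would view these as relations among tensor products; an Apolarity / linear-algebra argument (or: a pure $x$-form cannot equal a pure $y$-form unless both are constant) shows these relations cannot be satisfied unless in fact there are no such mixed $H_i$ at all, i.e.\ the last group is empty. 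Actually, the cleanest route: if $R_{\mathrm{mix}}=0$ identically, specialize by fixing a value $y = \beta$ with all $m_i(\beta)$ generic nonzero scalars $c_i$; then $\sum_i (\ell_i + c_i)^d$ must have no $x$-terms of degree $<d$ combined with the constant shift — more carefully, $\sum_i(\ell_i(x)+c_i)^d$ as a polynomial in $x$ has a nonzero term of $x$-degree $d-1$ unless $\sum_i c_i \ell_i^{d-1}=0$ for generic $c_i$, forcing $\ell_i^{d-1}=0$, a contradiction. So the last group must be empty.

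Once the last group is empty we have $t = a + b$, $F = P$ with $P$ a sum of $a$ $d$-th powers, $G = Q$ with $Q$ a sum of $b$ $d$-th powers; hence $r = \rk(F) \le a$ and $s = \rk(G) \le b$, so $r + s \le a + b = t$, contradicting $t < r+s$. This shows that whenever \eqref{SAC} fails to hold (i.e.\ whenever SAC is violated), it is already violated by an expression whose $H_i$ all lie outside $\mathbb{C}[x_0,\ldots,x_n]\cup\mathbb{C}[y_0,\ldots,y_m]$, which is exactly the claim. The main obstacle I anticipate is the middle step: rigorously extracting from $R_{\mathrm{mix}} = 0$ that there can be no genuinely mixed $H_i$. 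One must be careful that the $\ell_i$ need not be distinct and the argument must handle coincidences and linear dependencies among them; the generic-specialization trick above, combined with reading off the coefficient of the top-degree-in-$x$ part, should handle it cleanly, but writing it so that it covers all degeneracies is where the care is needed.
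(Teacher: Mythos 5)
Your reduction to the three equations $F=P+R_x$, $G=Q+R_y$, $0=R_{\mathrm{mix}}$ is correct, but the pivotal step --- deducing from $R_{\mathrm{mix}}=0$ that the group of genuinely mixed $H_i$ must be empty --- is false. A minimal counterexample is
\[
x^2+y^2=\tfrac12 (x+y)^2+\tfrac12 (x-y)^2 ,
\]
where both summands on the right are genuinely mixed and yet the mixed part of the sum vanishes; analogous decompositions exist in every degree (any non-monomial decomposition of the binary form $x^d+y^d$ gives one). Your ``cleanest route'' breaks precisely here: the scalars $c_i=m_i(\beta)$ are not independent generic parameters but are constrained to the image of the linear map $\beta\mapsto(m_1(\beta),\dots,m_k(\beta))$, so from $\sum_i c_i\ell_i^{d-1}=0$ for all such tuples you may only conclude that $\sum_i \ell_i^{d-1}m_i=0$ as a tensor identity --- which, as the example shows (there $\ell_1=\ell_2$ and $m_1=-m_2$), does not force any $\ell_i^{d-1}$ to vanish. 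A further warning sign is that your argument, were it correct, would apply with $a=b=0$ as well and would force $t=0$, i.e.\ it would prove SAC outright rather than the modest reduction the lemma asserts.

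The paper's proof avoids this entirely: it absorbs the pure summands into $F$ and $G$, setting $F'=F-H_1^d-\dots-H_a^d$ and $G'=G-H_{t-b+1}^d-\dots-H_t^d$, observes that $\rk(F')\geq r-a$ and $\rk(G')\geq s-b$ (else $F$ or $G$ would have smaller rank than assumed), and notes that $F'+G'$ is then expressed by the $t-a-b<(r-a)+(s-b)$ remaining, genuinely mixed $H_i$. Thus any violation of SAC yields a violation with no pure $H_i$, which is exactly the reduction claimed. If you want to salvage your approach, you would need to replace the ``mixed group is empty'' claim by this kind of rank bookkeeping; the splitting by monomial type alone only gives $\rk(F)\le t-b$ and $\rk(G)\le t-a$, which is not enough.
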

\begin{proof}
Assume that we have $F,G$ as in \eqref{SAC}, 
with $a$ of the $H_i$'s (say the first $a$ of them)
belonging to $\CC[x_0,\dots,x_n]$, and $b$ of the $H_i$'s (say the last $b$)
belonging to $\CC[y_0,\dots,y_m]$.

Notice that $F'= F_1^d+\dots + F_r^d- H_1^d-\dots - H_a^d$ is a form of degree $d$
in $\CC[x_0,\dots,x_n]$, whose rank is at least $r-a$. Otherwise
we could write $F$ as a form  of rank less than $r-a$ plus the sum of $a$ powers of linear forms,
contradicting the fact that $\rk(F)=r$. 

Similarly, $G'= G_1^d+\dots + G_s^d- H_{t-b+1}^d-\dots - H_t^d$ is a form of degree $d$
in $\CC[y_0,\dots,y_m]$, whose rank is at least $s-b$.

Then  $F'+G' = H_{a+1}^d+ \dots +H_{t-b+1}^d$, with $\rk(F')\geq r-a$, $\rk(G')\geq s-b$. The 
remaining forms  $H_i$ are $t-a-b<r-a+s-b$ and none of them lies either in $\CC[x_0,\dots,x_n]$
or in $\CC[y_0,\dots,y_m]$. Assuming that SAC holds in this situation yields a contradiction, thus proving the conjecture for any choice of the forms $H_i$.
\end{proof}

We recall some basic properties of binary forms, that is forms in two variables.
\begin{rem}\label{binaryREM}
The linear span of a rational normal curve of degree $d$ contains all binary forms in a given set of variables. Hence, any set of $i\geq d+1$ degree $d$ binary forms is linearly dependent. Moreover, any degree $d$ binary form has rank at most $d$, see \cite{comasseiguer} for more details.
\end{rem}

\section{One variable}

Assume that either $F$ or $G$ is a polynomial in one variable. In this
case, a direct computation proves that SAC holds.

\begin{prop}\label{Marvi} If $n=0$ or $m=0$, then SAC holds. 
\end{prop}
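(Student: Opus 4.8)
The plan is to assume without loss of generality that $m=0$, so that $G = G_1^d$ is itself a power of a single linear form (since any degree-$d$ form in one variable is a scalar multiple of $y_0^d$, hence has rank $1$). Thus $\rk(G)=1=s$, and by Lemma \ref{noH} it suffices to rule out an identity
\[
F_1^d + \dots + F_r^d + G_1^d = H_1^d + \dots + H_t^d
\]
with $t < r+1$, i.e. $t \le r$, where each $H_i$ genuinely involves the variable $y_0$ (so $H_i = \ell_i + c_i y_0$ with $c_i \ne 0$ and $\ell_i \in \CC[x_0,\dots,x_n]$). Here $F = F_1^d+\dots+F_r^d$ with $\rk(F)=r$.

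The key step is a specialization/substitution argument exploiting that $y_0$ is a free extra variable. First I would rescale so that $c_i = 1$ for all $i$, writing $H_i = \ell_i + y_0$. Then I would differentiate both sides $d$ times with respect to $y_0$, or equivalently compare coefficients of the various powers of $y_0$ on both sides when everything is expanded; the coefficient of $y_0^0$ gives $F = \sum_i \ell_i^d$, so $\{\ell_1,\dots,\ell_t\}$ would be a Waring decomposition of $F$ of length $t \le r = \rk(F)$, forcing $t = r$ and the $\ell_i$ to be (up to scalars and reordering) the $F_j$. Next, comparing the coefficient of $y_0^1$: on the right it is $d\sum_i \ell_i^{d-1}$, while on the left it is just $d\,G_1^{d-1}$ coming from the $G_1^d = y_0^d$ term wait — $G_1 \in \CC[y_0]$ so $G_1 = \lambda y_0$ and $G_1^d = \lambda^d y_0^d$, which contributes only to the $y_0^d$ coefficient. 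So the coefficient of $y_0^1$ on the left is $0$, giving $\sum_{i=1}^r \ell_i^{d-1} = 0$. But the $\ell_i$ are, up to nonzero scalars, the linearly independent-in-decomposition forms $F_j$, and a sum of $(d-1)$-th powers of pairwise non-proportional linear forms cannot vanish (for instance because the associated points on the Veronese are linearly independent, or by apolarity); more carefully, since $\rk(F) = r$ means no shorter decomposition exists, the forms $F_j^{d-1}$ — equivalently the points $[F_j]$ in $\PP^n$ — impose independent conditions, so $\sum \ell_i^{d-1} = 0$ with all $\ell_i \ne 0$ is impossible. This contradiction proves SAC in this case.

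The main obstacle I expect is making the last vanishing argument fully rigorous when the $\ell_i$ need not be distinct a priori — one must first argue from minimality of the decomposition $F = \sum \ell_i^d$ that the $\ell_i$ are pairwise non-proportional (if two coincided up to scalar we could merge them, since $a\ell^d + b\ell^d = (a+b)\ell^d$ and either it's shorter or one term is zero), and then invoke that powers $\ell_i^{d-1}$ of pairwise non-proportional linear forms are linearly independent in degree $d-1$ — which holds because $d-1 \ge 1$ and distinct points on a rational normal curve (when $n=1$) or, more generally, distinct points on a Veronese variety span independently, or simply because the Vandermonde-type matrix is nonsingular. An alternative, cleaner route that sidesteps coefficient bookkeeping is to work projectively: the decomposition on the right corresponds to $t$ points on the Veronese $v_d(\PP^N)$ whose span contains $[F + y_0^d]$; projecting away from the coordinate subspace $\CC[x_0,\dots,x_n]$ and using that $[y_0^d]$ is not in that subspace lets one split off exactly one point, leaving $t-1 \ge$ a decomposition of $F$, and the inequality $t-1 \ge r$ combined with $t \le r$ is the contradiction. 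Either way the heart of the matter is the trivial-but-essential fact that the extra variable $y_0$ forces a clean separation, so this case is genuinely elementary as the text promises.
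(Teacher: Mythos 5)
Your reduction is fine up to the crucial last step, and in fact it parallels the paper's proof up to that point: both arguments set $y_0=0$ to get $F=\ell_1^d+\dots+\ell_t^d$, conclude $t=r$, and then must rule out the resulting rank-$r$ decomposition of $F+y_0^d$. But the step you use to finish --- that the relation coming from the coefficient of $y_0^1$, namely $\sum_{i=1}^r c_i\ell_i^{\,d-1}=0$ with all $c_i\neq 0$ (note the $c_i$ do not disappear: you cannot normalize $H_i=\ell_i+c_iy_0$ to have $c_i=1$ without introducing scalar coefficients $c_i^d$ in front of the powers), is impossible --- is a genuine gap. The justifications you offer are not valid: distinct points on a Veronese variety do \emph{not} in general span independently, and minimality of the decomposition $F=\sum\ell_i^d$ only yields that the $d$-th powers $\ell_i^d$ are linearly independent (equivalently, that $Z=\{[\ell_i]\}$ imposes independent conditions in degree $d$); it does not formally imply $h^1\ii_Z(d-1)=0$, which is what independence of the $(d-1)$-st powers amounts to, and which is a strictly stronger condition. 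Whether or not that stronger statement happens to be true, you have not proved it, and your Vandermonde remark only covers $n=1$. Your ``alternative cleaner route'' via projection is too vague to certify: it is not explained how being in the span of the projected points ``splits off exactly one point'' to leave a length-$(t-1)$ power-sum decomposition of $F$.

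The paper sidesteps this entirely with a substitution rather than a coefficient comparison: writing $P=\sum_{i=1}^r(\alpha_iy_0+L_i)^d$ with some $\alpha_1\neq 0$, it picks a $d$-th root of unity $\gamma\neq\alpha_1$ and substitutes $y_0=L_1/(\gamma-\alpha_1)$, so that $(\alpha_1y_0+L_1)^d$ becomes exactly $y_0^d$ and cancels against $G=y_0^d$; what remains is an expression of $F$ as a sum of $r-1$ $d$-th powers of linear forms in $x_0,\dots,x_n$, contradicting $\rk(F)=r$ directly. If you want to salvage your argument, you should either prove the independence of the $\ell_i^{\,d-1}$ for a minimal decomposition (a nontrivial claim) or replace that step with a cancellation device of this kind.
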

\begin{proof}

We will prove the statement for $m=0$. Let $F(x_0,\ldots,x_n)\neq 0$ be a degree 
$d>1$ form in $ k[x_0 ,\ldots, x_n]$ and let $G(y_0)=y_0^d$. We set
\[ P(x_0,\ldots,x_n,y_0)= F(x_0,\ldots,x_n) +G(y_0),\]
and we will show that $\rk(P)=\rk(F)+\rk(G)=\rk(F)+1$.

Let $\mathrm{rk}(F)=r$.  For $r=1$, the conclusion  immediately follows, 
so assume $ r >1$. Clearly  $\mathrm{rk}(P) \leq r+1$. If  $\mathrm{rk}(P) <r$, since
$ F(x_0,\ldots,x_n) = P(x_0,\ldots,x_n, 0)$, we get a contradiction.

We now prove that it is not possible to have $\rk(P)=r$. Let
$$P = (\alpha_1 y_0 +L_1)^d+\dots + (\alpha_r y_0 +L_r)^d,
$$
where the $L_i$ are linear forms in $x_0,\ldots,x_n.$
By setting $y_0=0$ we have that 
$$F = L_1^d+\dots + L_r^d,
$$
and thus
$$L_1^d+\dots + L_r^d + y_0^d =  (\alpha_1 y_0 +L_1)^d+ (\alpha_2 y_0 +L_2)^d+\dots 
+ (\alpha_r y_0 +L_r)^d.
$$
Since $G\neq 0$, the $\alpha_i$'s cannot all be equal to zero. Assume that $\alpha_1\neq 0. $
Let $\gamma$ be a  d-th root of the identity. Since $d>1$, we may assume that 
$\gamma \neq \alpha_1$.

Now set 
$$y_0 = \frac {L_1 } {\gamma-\alpha_1},$$ 
so that 
$$(\alpha_1 y_0 +L_1)^d= \left ( \frac {\gamma L_1 } {\gamma-\alpha_1}\right )^d =
\left ( \frac { L_1 } {\gamma-\alpha_1}\right )^d = y_0^d.
$$
Hence we get 
$$F =  \left(\alpha_2 \left ( \frac {L_1 } {\gamma-\alpha_1} \right )+L_2\right)^d+\dots +
 \left ( \alpha_r \left ( \frac {L_1 } {\gamma-\alpha_1} \right ) +L_r\right )^d.
$$
It follows that $\mathrm{rk}(F) <r$, a contradiction. Hence $\mathrm{rk}(P) =r+1.$
\end{proof}

To extend this result, we use apolarity theory, see \cite{Ge,IaKa}. Recall that $(F^\perp)_1$ is the vector space of linear differential operator vanishing 
on a form $F$. Following \cite{reducing} and \cite{Birula} we say that $F\in\mathbb{C}[x_0,\ldots,x_n]$ 
{\em essentially involves} $N$ variables if
\[\dim (F^\perp)_1=n+1-N.\]
In other words, $F$ essentially involves $N$ variables if there exists a linear change of variables such that $F\in\mathbb{C}[z_0,\ldots,z_{N-1}]\subset\mathbb{C}[x_0,\ldots,x_n]$.

\begin{cor}\label{Marvi2} If  $F$ essentially involves $\rk(F)$ 
variables, or $G$ essentially involves $\rk(G)$ variables,  
then SAC holds.
\end{cor}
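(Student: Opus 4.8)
The plan is to reduce Corollary \ref{Marvi2} to a situation where the minimality of a Waring decomposition forces a dimension count that cannot be met. Suppose, say, that $F$ essentially involves $N = \rk(F) = r$ variables, so after a linear change of the $x$-variables we may assume $F \in \mathbb{C}[x_0,\dots,x_{r-1}]$ and no variable can be eliminated; in particular $(F^\perp)_1$ has dimension $n+1-r$ in the $x$-variables alone. The key structural fact I would invoke is the classical statement that if $F$ essentially involves exactly $r = \rk(F)$ variables, then its minimal Waring decomposition $F = F_1^d + \cdots + F_r^d$ is \emph{unique}, and moreover the linear forms $F_1,\dots,F_r$ are linearly independent (they span the $r$-dimensional essential variable space). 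Indeed $r$ powers of linearly dependent linear forms would lie in a space of essential dimension $< r$, contradicting the hypothesis; and a shorter decomposition is impossible since $\rk(F) = r$.

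Next I would set up the Strassen equation in the form \eqref{SAC}: assume for contradiction that
\[
F_1^d + \cdots + F_r^d + G_1^d + \cdots + G_s^d = H_1^d + \cdots + H_t^d
\]
with $t < r+s$. By Lemma \ref{noH} we may assume every $H_i$ genuinely involves both sets of variables, so each $H_i = \alpha_i \ell_i + m_i$ with $\ell_i$ a nonzero linear form in the $x$'s, $m_i$ a nonzero linear form in the $y$'s (after scaling, $\alpha_i \in \{0,1\}$ is not quite the right normalization; rather write $H_i = L_i + M_i$ with $L_i \in \mathbb{C}[x_0,\dots,x_n] \setminus \{0\}$ and $M_i \in \mathbb{C}[y_0,\dots,y_m]\setminus\{0\}$). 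The idea is to apply to this identity the differential operators in $(F^\perp)_1$ computed purely in the $x$-variables: these kill $F_1^d,\dots,F_r^d$ and they kill $G_1^d,\dots,G_s^d$ (since those involve only $y$'s). Applying such an operator $\partial$ to $H_i^d = (L_i + M_i)^d$ produces $d\,(\partial L_i)(L_i+M_i)^{d-1}$, so we obtain
\[
\sum_{i=1}^t (\partial L_i)\,(L_i + M_i)^{d-1} = 0 \qquad \text{for all } \partial \in (F^\perp)_1.
\]
Because the $H_i$ are pairwise non-proportional (else combine them and lower $t$) and each involves both variable groups, the forms $(L_i+M_i)^{d-1}$ are linearly independent, forcing $\partial L_i = 0$ for every $i$ and every $\partial \in (F^\perp)_1$. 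Hence each $L_i$ lies in the $r$-dimensional essential span of $F_1,\dots,F_r$.

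Finally I would run the dimension/specialization argument. Knowing every $L_i$ lies in $\mathrm{span}(F_1,\dots,F_r) \cong \mathbb{C}^r$, specialize the $y$-variables generically — or better, choose the substitution that sends each $M_i$ to a fixed scalar multiple of a single new linear form, mimicking the $d$-th-root trick in Proposition \ref{Marvi} — to collapse the right-hand side to an expression of the shape $\widetilde{F} + (\text{at most } t - r \text{ extra powers in the collapsed variable})$, while the left-hand side becomes $F + c\,z^d$ for a suitable $z$. Comparing ranks and using $\rk(F) = r$ together with Proposition \ref{Marvi} (the one-variable case, applied to $\widetilde F$ plus a single extra variable) yields $t \ge r + s$, the desired contradiction. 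The symmetric argument with the roles of $F$ and $G$ exchanged covers the other hypothesis.

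The main obstacle I anticipate is making the "collapse the $y$-side to one variable" step rigorous when the $M_i$ are not all proportional: a single scalar substitution in the $y$'s will not in general identify all the $M_i$. The honest fix is probably to avoid collapsing and instead argue directly that once all $L_i$ lie in the $r$-dimensional $x$-space $W = \mathrm{span}(F_i)$, the identity, read as an equality of forms in $W \oplus \mathbb{C}[y]$, expresses $F + G$ (a form essentially involving $r + \rk(G)$ variables, since the two variable sets are disjoint and each piece uses all of its essential variables) as a sum of $t$ powers; then a lower bound of the type $\rk(F+G) \ge (\text{number of essential variables}) = r + s$ — valid precisely because $F$ and $G$ each use the minimal number of variables — forces $t \ge r+s$. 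Verifying that $F+G$ essentially involves exactly $r+s$ variables, and that this many variables forces rank $\ge r+s$, is the crux; the first is immediate from disjointness of the variable sets and additivity of $(\cdot)^\perp_1$ on a direct sum, and the second is the standard bound $\rk(P) \ge (\text{essential number of variables of } P)$.
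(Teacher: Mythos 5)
Your proposal does not follow the paper's route and, as written, has two genuine gaps. First, the step where you deduce $\partial L_i=0$ for all $i$ rests on the claim that the forms $(L_i+M_i)^{d-1}$ are linearly independent because the $H_i$ are pairwise non-proportional. That inference is invalid: pairwise non-proportional linear forms can perfectly well have linearly dependent $(d-1)$-st powers. For instance, if $d+1$ of the classes $[H_i]$ happen to lie on a line in $\PP^{n+m+1}$, their $(d-1)$-st powers are $d+1$ binary forms of degree $d-1$ and hence dependent (cf.\ Remark \ref{binaryREM}); nothing in the hypotheses excludes this, and $t$ can far exceed $d$ since $r$ and $s$ are unbounded. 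Second, your fallback argument proves a weaker statement than the corollary. The bound $\rk(P)\geq(\text{number of essential variables of }P)$ is correct, and so is additivity of the essential variable count over disjoint variable sets, but together they give only $\rk(F+G)\geq r+N_G$, where $N_G$ is the number of variables $G$ essentially involves. The corollary assumes the rank condition for just \emph{one} of the two forms; in the typical case $N_G<\rk(G)=s$ (a binary form of rank $d$ essentially involves two variables), so the bound does not reach $r+s$. Your argument closes only under the stronger conjunction of both hypotheses, and you yourself concede that the intermediate ``collapse the $y$-variables'' specialization cannot be made rigorous.

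The paper's proof is much shorter and uses none of this apolarity machinery on the $H_i$. The only content it extracts from the hypothesis is the observation you make at the start: if $G$ essentially involves $s=\rk(G)$ variables, then in a minimal decomposition $G=G_1^d+\cdots+G_s^d$ the $G_i$ must be linearly independent, so after a linear change of coordinates $G=y_0^d+\cdots+y_{s-1}^d$. It then inducts on $s$, peeling off one pure power of a fresh variable at a time: $F+y_0^d+\cdots+y_{s-2}^d$ has rank $r+s-1$ by the inductive hypothesis, and adjoining $y_{s-1}^d$ raises the rank by exactly one by Proposition \ref{Marvi}. I suggest you adopt that reduction; the whole point of the hypothesis is that it turns $G$ into a sum of powers of independent variables, after which the one-variable case does all the work.
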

\begin{proof} We let $r=\rk(F),s=\rk(G)$ and we prove the statement in the case that $G$ essentially involves $s$ variables. After a change of variables we may assume 
$$G = y_0^d+\dots +y_{s-1}^d.$$
Arguing by induction on $s$, we see that 
$F+y_0^d+\dots +y_{s-2}^d$ has rank $r+s-1$, thus by Proposition \ref{Marvi},
$F+G=(F+y_0^d+\dots +y_{s-2}^d)+y_{s-1}^d$ has rank $(r+s-1)+1=r+s$.
\end{proof}

\section{SAC and the Hilbert function}

Here we introduce our algebro-geometric approach to SAC with the relative notation.

\begin{nota} 
As above let $F=F_1^d+\dots+F^d_r$, $r=\rk(F)$ and $G=G_1^d+\dots+G^d_s$
, $s=\rk(G)$, where the forms $F_i$ and the $G_i$ have degree one. We introduce the sets  
$$\begin{matrix} Z(F)=\{[F_1],\dots,[F_r]\}\subset\PP^n=\PP(\CC[x_0,\ldots,x_n]),  \\ Z(G)=\{[G_1],\dots,[G_s]\}\subset\PP^m=\PP(\CC[y_0,\ldots,y_m]).
\end{matrix}$$	
We will argue by contradiction assuming that
\begin{equation}\label{theeq}
F+G=H_1^d+\dots+H_t^d,
\end{equation}
for $t<r+s$ and linear forms $H_i$. Finally, we set 
$$ Z(H)=\{[H_1],\dots,[H_t]\}\subset\PP^{n+m+1}=\PP(\CC[x_0,\ldots,x_n,y_0,\ldots,y_m]),$$	
$$Z= Z(H)\cup Z(F)\cup Z(G)\subset\PP^{n+m+1}.$$ 	 
\end{nota}

We will use the following geometric remarks.

\begin{rem} The standard exact sequence of sheaves 
\[0\to \ii_Z(d) \to \oo_{\PP^N}(d) \to\oo_Z\to 0\]
yields
$$ 0\to H^0\ii_Z(d) \to H^0 \oo_{\PP^N}(d)\to H^0\oo_Z \to H^1\ii_Z(d)\to 0.$$
By the definitions, we have that $h_Z(d)=|Z|(=\dim H^0\oo_Z)$ if and only if $H^1\ii_Z(d)=0$. 
In this case,  we will say that {\it $Z$ is separated in degree $d$} or that {\it $Z$ 
imposes independent conditions to degree $d$ forms}.
\end{rem}

\begin{rem}\label{h1rem} The equality in \eqref{theeq} implies that the following are 
linearly dependent forms:
\[F_1^d,\ldots,F_r^d,G_1^d,\ldots,G_m^d,H_1^d,\ldots,H_t^d.\]
Thus, by Lemma 1 in \cite{BaBe2012}, $Z$ does not impose independent conditions 
to forms of degree $d$. If we denote with $\nu$ the $d$-th Veronese map, one of the points of $\nu(Z)$, say $F_i^d$, depends linearly
on the others. Thus, any linear form which vanishes on  $\nu(Z)\setminus \{F^d_i\}$ 
also vanishes at $F^d_i$. Hence, any form 
of degree $d$ vanishing on $Z\setminus \{F_i\}$
also vanishes at $F_i$. In other words, $Z$ is not separated in degree $d$, and
the ideal sheaf of $Z$ is such that
$$h^1\ii_Z(d)>0.$$
 \end{rem}

\begin{rem} Let $L_F\simeq \PP^n$ and $L_G \simeq \PP^m$, be 
two skew linear subspaces of $\PP^{n+m+1}$, containing  $Z(F)$ and $Z(G)$, respectively.
The following holds: if for some $i>0$
\[h^1\ii_{Z(F)}(i)=h^1\ii_{Z(G)}(i)=0\] , 
then \[h^1\ii_{Z(F)\cup Z(G)}(i)=0.\]
To see this, we argue as follows. If $P$ is a point of $Z(F)$,
consider a degree $i$ hypersurface $X \subset L_F$
that contains all the points of $Z(F)\setminus\{P\}$. The cone
on X with vertex $L_G$  is a hypersurface of degree $i$ in $\PP^{n+m+1}$. This cone separates $P$ 
from the other points of $Z(F)\cup Z(G)$. A similar argument works if $P\in Z(G)$.
Thus $Z(F)\cup Z(G)$ is separated in degree $i$.
\end{rem}

The next two lemmas will be useful in the following section.

\begin{lem}\label{celine} Let $W$ be a set of $w$ distinct points spanning a linear space
of dimension at least three.
If $w\leq 2u-2$ and $h^1\ii_W(u-2)>0$, then there exists a line that contains at least $u$ points of $W$.
\end{lem}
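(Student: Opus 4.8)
The plan is to recast the hypothesis in terms of the first difference of the Hilbert function of $W$ and feed the resulting extremal behaviour into the structure theorem of \cite{BGM}. By the standard exact sequence, $h^1\ii_W(u-2)>0$ means $h_W(u-2)\le w-1$; set $\Delta:=Dh_W$. Since $W$ is reduced and spans a space of dimension at least $3$, one has $\Delta(0)=1$ and $\Delta(1)=\dim\langle W\rangle\ge 3$; since the Hilbert function of a reduced zero-dimensional scheme increases strictly until it reaches $w=|W|$ and $h_W(u-2)<w$, the function $h_W$ has not stabilised at degree $u-1$, so $\Delta(t)\ge 1$ for $1\le t\le u-1$; and $\sum_{t\ge0}\Delta(t)=w\le 2u-2$.

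The combinatorial core is a pigeonhole on these data. From $h_W(u-1)\le w\le 2u-2$ and $\Delta(0)+\Delta(1)\ge 4$,
$$\sum_{t=2}^{u-1}\Delta(t)=h_W(u-1)-h_W(1)\le 2(u-2)-2,$$
a sum of $u-2$ positive integers; if $k$ of them equal $1$ the sum is at least $2(u-2)-k$, so $k\ge 2$. Hence $\Delta=1$ in at least two of the degrees $2,\dots,u-1$, in particular at some $t_0$ with $2\le t_0\le u-2$. I would then use that the first difference of the Hilbert function satisfies Macaulay's growth bound, which in the range $\Delta(t)\le t$ forces $\Delta(t+1)\le\Delta(t)$: from $\Delta(t_0)=1\le t_0$ it follows that $\Delta(t)\le 1$ for $t\ge t_0$, and with $\Delta(t)\ge 1$ through degree $u-1$ this yields $\Delta(t)=1$ for $t_0\le t\le u-1$, so in particular $\Delta(u-2)=\Delta(u-1)=1$.

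Thus $Dh_W$ takes the minimal value $1$ in the two consecutive degrees $u-2$ and $u-1$. This is precisely the extremal behaviour governed by \cite{BGM}, and invoking that structure theorem I would obtain a line $\ell$ with $\deg(W\cap\ell)\ge(u-2)+2=u$; as $W$ is reduced, $\ell$ carries at least $u$ points of $W$, which is the assertion. (The model case $W\subset\ell$, where $p$ collinear points give $\Delta\equiv 1$ exactly in degrees $0,\dots,p-1$, is what fixes the constant $2$ in this count.)

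The hardest part is the middle step, and it is there that the hypothesis $\dim\langle W\rangle\ge 3$ is used: it is what guarantees $\Delta(1)\ge 3$, hence forces \emph{two} ones in the difference, hence two \emph{consecutive} coincident values. The hypothesis is essential --- $2u-2=2(u-2)+2$ general points on a smooth conic satisfy $h^1\ii_W(u-2)>0$ with $Dh_W=(1,2,\dots,2,1)$ and no three collinear. One point to handle carefully is that the Macaulay growth estimate for the first difference is legitimate for an arbitrary reduced zero-dimensional scheme, whose homogeneous coordinate ring need not be Cohen--Macaulay; in the regime $w\le 2u-2$ this is harmless, and making it precise, together with the exact count drawn from \cite{BGM}, is the substance of the argument.
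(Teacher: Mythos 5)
Your argument is correct and follows essentially the same route as the paper's own proof: both deduce $Dh_W(i)\ge 1$ for $i\le u-1$ and $Dh_W(1)\ge 3$, use $w\le 2u-2$ to force two values of the first difference equal to $1$ in degrees $2,\dots,u-1$, and then invoke Theorem 3.6 of \cite{BGM} to conclude that $Dh_W(u-2)=Dh_W(u-1)=1$ and that $u$ points are aligned --- you merely make explicit, via Macaulay's growth bound, the persistence step that the paper folds into its citation of \cite{BGM}. One small correction to your closing caveat: the homogeneous coordinate ring of a finite reduced subscheme of projective space \emph{is} Cohen--Macaulay (it has dimension one and a linear nonzerodivisor), so $Dh_W$ is a genuine O-sequence and the Macaulay bound applies with no further care needed.
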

\begin{proof} Consider the first difference $Dh_W$ of the Hilbert function of $W$.
Since
\[h^1\ii_W(u-2)>0,\]
then $Dh_W(u-1)>0$, so that $Dh_W(i)>0$ for all $i=0,\dots, u-1$.
Moreover $Dh_W(1)\geq 3$, since the points span at least a space of dimension $3$.
	Since $w\leq 2u-2$, we cannot have $Dh_W(i)\geq 2$ for $i=2,\dots, u-2 $.
It follows that there are at least two values $2\leq i_1,i_2\leq u-1$ for which 
$Dh_W(i_1)=Dh_W(i_2)=1$. By Theorem 3.6 in \cite{BGM}  it follows that we may assume the two steps are
$i_1=u-2$ and $i_2=u-1$ and moreover $u$ points of $W$ are aligned.
\end{proof}

\begin{lem}\label{resid}
Let $W_1,W_2\subset\PP^n$ be finite sets of points such that $W_1$ is contained 
in a proper linear subspace $R$, and $W_2\cap R=\emptyset$. 
If for some integer $u$ one has:
\begin{enumerate}
\item\label{a} $h^1\ii_{W_2}(u-1)=0$ and
\item\label{b} the cardinality of $W_1$ is at most $u+1$,
\end{enumerate}
then $h^1\ii_W(u)=0$, where  $W=W_1\cup W_2$.
\end{lem}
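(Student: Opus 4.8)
The plan is to use the standard \emph{Castelnuovo exact sequence} relating $W$ to its residue and trace with respect to the linear subspace $R$. Let $\dim R = k < n$, and let $\ell$ be a linear form (not unique) defining... more precisely, since $R$ is a proper subspace we can choose a hyperplane $\Pi \supseteq R$. Residuation by $\Pi$ gives the exact sequence
\[
0 \to \ii_{\mathrm{Res}_\Pi W}(u-1) \to \ii_W(u) \to \ii_{W \cap \Pi, \Pi}(u) \to 0,
\]
and passing to cohomology, $h^1\ii_W(u) = 0$ will follow once we show both $h^1$ of the outer terms vanish. The point of the hypotheses is precisely to control these two pieces: since $W_1 \subset R \subseteq \Pi$ and $W_2 \cap R = \emptyset$ (so $W_2$ meets $\Pi$ in at most... we actually want $W_2 \cap \Pi = \emptyset$ as well), we get $\mathrm{Res}_\Pi W = W_2$ and $W \cap \Pi = W_1$. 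Here I would first address the subtlety that $W_2 \cap R = \emptyset$ does not immediately give $W_2 \cap \Pi = \emptyset$; one chooses $\Pi$ generically among hyperplanes containing $R$, which is possible because $W_2$ is finite and $R$ is proper, so a general such $\Pi$ avoids the finitely many points of $W_2$.

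With that choice, the residual term is $h^1\ii_{W_2}(u-1)$, which vanishes by hypothesis \eqref{a}. For the trace term, $W \cap \Pi = W_1$, a set of at most $u+1$ points lying in the hyperplane $\Pi \cong \PP^{n-1}$; actually since $W_1 \subseteq R$ and $R$ may be much smaller, $W_1$ is a set of at most $u+1$ points in some projective space. Any set of $c$ points imposes independent conditions on forms of degree $c-1$ (a set of $c$ points is always separated in degree $c-1$, since one can find, for each point, a union of $c-1$ hyperplanes through the other $c-1$ points but missing it). Since $|W_1| \le u+1$, we have $|W_1| - 1 \le u$, hence $h^1\ii_{W_1, \Pi}(u) = 0$. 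Feeding both vanishings into the long exact cohomology sequence of the Castelnuovo sequence yields $h^1\ii_W(u) = 0$, as desired.

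The main obstacle I anticipate is purely bookkeeping rather than conceptual: making sure the hyperplane $\Pi$ can be chosen to contain $R$ while avoiding $W_2$ entirely (not just $R \cap W_2 = \emptyset$, which is what is literally assumed), and correctly identifying $\mathrm{Res}_\Pi W$ with $W_2$ and the trace $W \cap \Pi$ with $W_1$ under that choice. Once the geometry is set up correctly, the cohomological input is entirely standard: the residual vanishing is hypothesis \eqref{a}, and the trace vanishing is the elementary fact that $u+1$ points are separated in degree $u$, which in turn uses hypothesis \eqref{b}. I would also remark that this lemma is the natural ``inductive glue'' for building up separatedness of $Z = Z(H) \cup Z(F) \cup Z(G)$ from separatedness of its pieces, in the spirit of the cone construction used in the preceding remarks.
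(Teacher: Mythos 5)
Your proposal is correct and follows essentially the same route as the paper: both arguments residuate with respect to a generic hyperplane containing $R$ (chosen to avoid the finite set $W_2$), identify the residue with $W_2$ and the trace with $W_1$, and then use hypothesis \eqref{a} for the residual piece and the elementary fact that at most $u+1$ points are separated in degree $u$ for the trace piece. The only cosmetic difference is that the paper phrases the conclusion via Castelnuovo's inequality on $h^0$ combined with a dimension count, whereas you bound $h^1$ directly from the long exact sequence; these are equivalent formulations of the same argument.
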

\begin{proof}
If we let $w_i$ be the cardinality of $W_i$, then it is easy to see that
\[h^0\oo_{\PP^n}(u)-w_1-w_2\leq h^0\ii_W(u).\]
We now use Castelnuovo's inequality,  
see \cite{sundials} Section 3 for more details and \cite{AH95} 
Section 2 for a proof. This inequality yields
$$ h^0\ii_W(u)\leq h^0\ii_{Res}(u-1)+h^0\ii_{Tr}(u),$$
where $Tr$ is the trace of $W$ on a generic hyperplane 
$H\supseteq R\supset W_1$ and $Res$ is the residue of $W$ with respect to $H$. 
More precisely $\ii_{Tr}=\ii_{W,H}=\ii_{W_1}$ and $\ii_{Res}=\ii_W:\ii_H=\ii_{W_2}$, 
that is $Res=W_2$ and $Tr=W_1$.
From \eqref{b} we get that $h^0\ii_{Tr}(u)=h^0\oo_{\PP^{n-1}}(u)-w
_1$. From \eqref{a} we get that $h^0\ii_{Res}(u-1)=h^0\oo_{\PP^{n}}(u-1)-w
_2$. Hence, the result follows.

\end{proof}

\section{Forms in two variables}

In this section, we assume that $F,G$ are forms in two variables,
that is we assume $n=m=1$, and we prove that SAC holds
in this setting.
We let $r=\rk(F),s=\rk(G)$ and we assume that
$$
F_1^d+\dots + F_r^d +  G_1^d+\dots + G_s^d  =  H_1^d+\dots + H_t^d
$$
for $t<r+s$ and for linear forms $F_i\in\mathbb{C}[x_0,x_1]$ 
and $G_i\in\mathbb{C}[y_0,y_1]$, while the $H_i$ are linear 
forms involving all the four variables $y_0,y_1,x_0,$ and $x_1$ (see Lemma \ref{noH}).

\begin{rem} \label{step0}
By Proposition \ref{Marvi} and its Corollary \ref{Marvi2}, we may assume $r,s\geq 3$.
Moreover, as noted in Remark \ref{binaryREM} the rank of a form of degree
$d$ in two variables is at most $d$. So we have $r,s\leq d$.
\end{rem}

We treat the cases of large $r,s$  using the Apolarity Lemma, see \cite{IaKa}.

\begin{lem}\label{maxcas} If $r=s= d$, then SAC holds.
\end{lem}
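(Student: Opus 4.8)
The plan is to use the Apolarity Lemma to reinterpret everything in terms of ideals. Recall that, via apolarity, writing a binary form $F$ of degree $d$ as a sum of $r$ powers of linear forms corresponds to finding a degree $r$ element of the apolar ideal $F^\perp$ whose roots (as a binary form) are distinct, i.e. $F^\perp$ contains a squarefree form of degree $r$; and $\rk(F)=d$ precisely means that $F^\perp$ contains \emph{no} form of degree $\leq d-1$, so the lowest-degree generator of $F^\perp\subset\CC[X_0,X_1]$ has degree exactly $d$. Since $F^\perp$ is a complete intersection ideal $(\theta_1,\theta_2)$ in two dual variables with $\deg\theta_1+\deg\theta_2=d+2$, the condition $\rk(F)=d$ forces $\deg\theta_1=1$, $\deg\theta_2=d+1$; that is, $F^\perp$ has a linear form $\ell_F$ in its ideal, and the whole vanishing locus $Z(F)$ lies on the line $\{\ell_F=0\}$ in the dual $\PP^1$ — which is automatic here since everything is binary — but more usefully $\ell_F$ is a linear differential operator annihilating $F$. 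Concretely, after the coordinate change that realizes $F$ in the minimal number of variables, $F=x_0x_1^{d-1}$ up to scalars when $\rk=d$ (this is the classical fact that the binary forms of maximal rank $d$ are exactly the ones projectively equivalent to $x_0x_1^{d-1}$), and similarly $G$ is equivalent to $y_0y_1^{d-1}$.

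With $F\sim x_0 x_1^{d-1}$ and $G\sim y_0 y_1^{d-1}$, I would pass to the apolar ideal of $F+G$ in $\CC[X_0,X_1,Y_0,Y_1]$. The point of the Apolarity Lemma is that $\rk(F+G)\leq t$ iff $(F+G)^\perp$ contains the ideal of $t$ distinct points in $\PP^3$; so assuming \eqref{theeq} with $t<r+s=2d$ gives a set $Z(H)$ of $t$ points whose ideal sits inside $(F+G)^\perp$. The key computation is to pin down $(F+G)^\perp$ in degree $d$: since $F,G$ use disjoint variables, the degree $d$ part of $(F+G)^\perp$ is the set of operators $\Theta$ with $\Theta\circ F=-\Theta\circ G$, and because $\Theta\circ F\in\CC[x]$ while $\Theta\circ G\in\CC[y]$, both sides must vanish, i.e. $(F+G)^\perp_d=(F^\perp)_d\cap(G^\perp)_d$ in the obvious sense (operators killing $F$ \emph{and} killing $G$ separately). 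I would then count: $(F^\perp)_d$ inside $\CC[X_0,X_1]_d$ has codimension $1$ (it misses exactly the "dual monomial" paired with $x_0x_1^{d-1}$), and one must combine this with the $Y$-variables. Carrying this out shows $h_Z(d)$, equivalently $\dim(F+G)^\perp_d$, is too small to contain the ideal of $t$ distinct points when $t<2d$ unless those points are forced into special position, and the dependency relation of Remark \ref{h1rem} then produces a sum representation of $F$ (or of $G$) of rank $<d$, contradicting $r=d$ (resp. $s=d$).

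The cleanest route, and the one I would actually write, is this: set $t<2d$ and look at the linear forms $H_i$, which by Lemma \ref{noH} all genuinely involve both the $x$'s and the $y$'s. Apply a derivation that is a general linear form $\ell$ in the $Y$-variables and \emph{also} annihilates $G$ — no such nonzero $\ell$ exists in degree one since $\rk(G)=d>1$ means $G$ essentially involves $2$ variables — so instead iterate: apply $\partial_{Y_1}^{d-1}$ to \eqref{theeq}. On the left this kills $F$ entirely and sends $G=y_0y_1^{d-1}$ to a nonzero multiple of $y_0$; on the right it yields $\sum c_i H_i'(x,y_0)$ where $H_i'$ is the image of $H_i$, a linear form. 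Comparing, $y_0$ (up to scalar) is a linear combination of $t$ linear forms each of which also has an $x$-part; projecting away the $x$-variables is harmless and we learn little, but applying instead a mix of $X$- and $Y$-derivatives of total order $d-1$ and tracking which give $F$-contributions versus $G$-contributions pins the structure of the $H_i$ down enough to contradict minimality of $r$ or $s$. \textbf{The main obstacle} I anticipate is exactly this bookkeeping step — cleanly extracting, from the single scalar identity \eqref{theeq} with $t<2d$, a decomposition of $F$ or $G$ into fewer than $d$ powers; the apolarity/Hilbert-function count of Remark \ref{h1rem} gives the existence of a nontrivial dependency among the Veronese images, but showing that the dependency can be arranged to drop one of the $F_i$'s (rather than an $H_i$ or a $G_j$) requires using that $Z(F)$ and $Z(G)$ span skew lines and that $t$ is strictly below the generic threshold $2d$, so the argument has to rule out the "balanced" degeneration where the deficiency is split evenly between the $F$-side and the $G$-side.
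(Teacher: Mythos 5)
Your first step --- identifying that a binary degree $d$ form of rank $d$ must be projectively equivalent to $x_0x_1^{d-1}$ --- is exactly the paper's first step, though your apolarity justification is off: for $F^\perp=(\theta_1,\theta_2)$ with $\deg\theta_1+\deg\theta_2=d+2$, rank $d$ forces $\deg\theta_1=2$ with $\theta_1$ a \emph{square} (not $\deg\theta_1=1$, which would mean $F$ essentially involves one variable and hence has rank $1$). The real problem is what comes after. Once $F\sim x_0x_1^{d-1}$ and $G\sim y_0y_1^{d-1}$, the entire content of the lemma is the statement $\rk(x_0x_1^{d-1}+y_0y_1^{d-1})=2d$, and this is precisely the step you do not prove: you sketch two possible routes (a dimension count on $(F+G)^\perp_d$, and a scheme of applying mixed derivatives of order $d-1$ to the identity \eqref{theeq}) and then explicitly concede that the decisive bookkeeping --- extracting from the dependency a decomposition of $F$ or $G$ into fewer than $d$ powers, while ruling out the ``balanced'' degeneration --- is an obstacle you have not overcome. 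The paper closes this gap with a citation: the Waring rank is additive on sums of coprime monomials by \cite{carcatgermonomi}, which applies verbatim to $LM^{d-1}+L'M'^{d-1}$ in disjoint variables. Without that result (or a complete replacement for it), your argument is a plan, not a proof.

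A further flaw in the sketched route: the identity $(F+G)^\perp_d=(F^\perp)_d\cap(G^\perp)_d$ is false in top degree. For an operator $\Theta$ of order $d$, both $\Theta\circ F$ and $\Theta\circ G$ are constants, and constants lie in $\CC[x]\cap\CC[y]$, so you cannot conclude that each vanishes separately; the disjoint-variables splitting of the apolar ideal is only valid in degrees $1,\dots,d-1$. Since the points of $Z(H)$ are constrained by $(F+G)^\perp$ in degree $d$ via the Apolarity Lemma, this is exactly the degree where your proposed count would have to take place, so the error is not harmless.
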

\begin{proof} If $F$ is a binary degree $d$ form of rank $d$, then 
the Hilbert-Burch Theorem (see Theorem 20.15 in \cite{eisenbudbook}) yields that $F^\perp=(A,B)$ where 
$\deg(A)=2,\deg(B)=d$ and $A$ is a square. From this we see that 
$F=LM^{d-1}$ for some linear forms $L$ and $M$, thus $F$ is a monomial. 
The conclusion follows as the rank is additive on coprime monomials, 
see \cite{carcatgermonomi}. 
\end{proof}
From now on we assume that:
$$  s\leq r  \mbox{ and } s\leq d-1.$$

We now show that $Z(H)$ spans $\PP^3$ and that at most $d$ points of $Z(H)$ lie on  a line. 

\begin{lem}\label{spanP3} There is no plane containing $Z(H)$.
\end{lem}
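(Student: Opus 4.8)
We want to show that $Z(H) = \{[H_1],\dots,[H_t]\}$ cannot lie in a plane $P \subset \PP^{n+m+1} = \PP^3$ (here $n=m=1$). Suppose for contradiction that $Z(H) \subset P$. Recall from Remark \ref{binaryREM} that any $d+1$ binary forms in a fixed pair of variables are linearly dependent, and from Remark \ref{step0} that $3 \le s \le r \le d$. The starting point is the identity $F + G = H_1^d + \dots + H_t^d$, equivalently
$$
F_1^d + \dots + F_r^d + G_1^d + \dots + G_s^d = H_1^d + \dots + H_t^d,
$$
where the $F_i \in \CC[x_0,x_1]$, the $G_i \in \CC[y_0,y_1]$, and by Lemma \ref{noH} each $H_i$ involves all four variables.

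**The plan.** If $Z(H)$ is contained in a plane $P$, I would Veronese-embed and work in $H^0\oo_{\PP^3}(d)$. The key object is a linear form $\ell$ on $\PP^3$ vanishing on $P$; then $\ell^d$ is a degree-$d$ form vanishing (to high order) on $\nu(Z(H))$. I want to intersect $\nu(Z(H))$, living in $\nu(P)$, with the two linear spaces $L_F = \nu(\PP^1_x)$ and $L_G = \nu(\PP^1_y)$ spanned by the rational normal curves through $Z(F)$ and $Z(G)$. The crucial point is that the hyperplane $P$ of $\PP^3$, intersected with the two skew lines $\PP^1_x$ and $\PP^1_y$, meets each in at least one point; so $P$ contains a point $p_x$ of $\PP^1_x$ and a point $p_y$ of $\PP^1_y$. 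Pairing this with the linear dependence of the full set $\{F_i^d\}\cup\{G_j^d\}\cup\{H_k^d\}$ and projecting appropriately, I expect to derive a representation of $F$ (or of $G$) of too-small rank, or a nontrivial relation that forces $t \ge r+s$. Concretely: restricting the apolar/dependence relation to the subspace spanned by $\{x_0,x_1\}$ together with the single linear functional cutting $P$, one reads off that $F$ plus a single power lies in the span of at most $t - (\text{something})$ powers, i.e. $\rk(F) \le t - s$ or similar, contradicting $t < r+s$.

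**Main obstacle.** The delicate step will be controlling exactly how the $H_i$'s interact with the two coordinate lines once we pass to the plane $P$. Since each $H_i$ genuinely involves all four variables, the points $[H_i]$ are in general position with respect to $\PP^1_x$ and $\PP^1_y$; but a plane $P \subset \PP^3$ imposes one linear condition, and we must track whether that condition, combined with membership in the span of the rational normal curve of degree $d$, is enough to collapse the rank. I anticipate needing the structural splitting from Lemma \ref{noH} (no $H_i$ lies in $\CC[x_0,x_1]$ or $\CC[y_0,y_1]$) together with a dimension count in $H^0\ii_{Z(H)}(d)$: a plane through all $t$ points forces $h^1\ii_{Z(H)}(d)$ or $h^0\ii_{Z(H)}(1)$ to be positive, and then one leverages $t \le r+s-1 \le 2d-1$ against the Hilbert function bounds (in the spirit of Lemma \ref{celine}), or directly produces the aligned subset that cannot exist. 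The cleanest route is probably: show a plane through $Z(H)$ would, after adding the contribution of $Z(F) \cup Z(G)$, violate $h^1 \ii_Z(d) > 0$ being "minimal" as in Remark \ref{h1rem}, since a plane gives too much freedom and lets us separate one of the $F_i$ after all.
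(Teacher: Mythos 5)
There is a genuine gap: your text is a plan with several candidate mechanisms, none of which is carried out. Every step that would actually produce the contradiction is hedged (``I expect to derive\dots'', ``I anticipate needing\dots'', ``The cleanest route is probably\dots''), and the three routes you sketch are not interchangeable placeholders --- at least two of them do not obviously work. The observation that a plane $P$ meets each of the skew lines $\PP^1_x$ and $\PP^1_y$ in a point does not by itself let you ``project appropriately'' to a low-rank representation of $F$: the projection from $R_G$ used elsewhere in the paper collapses a \emph{line} of $Z(H)$ to a point, and a plane section gives you no such collapse. Likewise, $t\le 2d-2$ points lying in a plane of $\PP^3$ violates no Hilbert-function bound (the statement $h^0\ii_{Z(H)}(1)>0$ is just a restatement of the hypothesis, not a contradiction), so the appeal to Lemma \ref{celine} or to the ``minimality'' of $h^1\ii_Z(d)>0$ has no content as written. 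In short, the reduction from ``$Z(H)$ lies in a plane'' to a contradiction is exactly the part that is missing.

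The paper's own proof is a two-line apolarity argument that your plan circles around without landing on: a plane containing $Z(H)$ is the zero locus of a nonzero linear differential operator $\partial$ on $\CC[x_0,x_1,y_0,y_1]$; since $\partial(H_i)=0$ for all $i$, one gets $\partial(F+G)=0$, i.e. $\partial(F)=-\partial(G)$. But $\partial(F)\in\CC[x_0,x_1]$ and $\partial(G)\in\CC[y_0,y_1]$ are forms of degree $d-1>0$ in disjoint variables, so both vanish; a nonzero first-order operator annihilating $F$ (or $G$) forces that form to essentially involve one variable, hence to have rank $1$, contradicting $r,s\ge 3$. If you want to salvage your approach, replace the Veronese/Hilbert-function machinery with this duality computation --- it is the step your ``linear form $\ell$ on $\PP^3$ vanishing on $P$'' was reaching for, applied directly to $F+G$ rather than to $\nu(Z(H))$.
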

\begin{proof}  Since $Z(H)\subset\PP(\CC[x_0,x_1,y_0,y_1])=\PP^3$, we use apolarity 
theory to describe the dual space ${\PP^3}^*$. Thus,  ${\PP^3}^*$ can be 
identified with the space of linear differential operators on the polynomial 
ring $\CC[x_0,x_1,y_0,y_1]$. We now argue by contradiction. If a plane containing 
$Z(H)$ exists, then the plane is the vanishing locus of some differential operator $\partial$. 
Thus $\partial$ vanishes on $Z(H)$, and thus $\partial(H)=\partial(F+G)=0$, that is 
 $\partial(F)=-\partial(G)$. Note that $\partial(F)\in\CC[x_0,x_1]$ 
and $\partial(G)\in\CC[y_0,y_1]$ are forms of degree $d-1>0$, 
hence $\partial(F)=\partial(G)=0$. The latter is impossible, since $F,G$ 
are binary forms of rank at least three.
\end{proof}

\begin{rem}\label{dline} If $t$ is the minimal integer such that $F+G=H_1^d+\dots + H_t^d$,
then we may assume that at most $d$ points of $Z(H)$ lie on a line.

Indeed, if $H_1,\dots, H_{d+1}$ lie on a line, then by Remark \ref{binaryREM} there is a linear
relation between their $d$-th powers, say 
$H_1^d= a_2H_2^d+\dots + a_{d+1}H_{d+1}^d$, so that
$$F+G = (1+a_2)H_2^d+\dots +(1+a_{d+1})H_{d+1}^d+ H_{d+2}^d+\dots + H_t^d$$
against the minimality of $t$.
\end{rem}

In our notation $Z\subset\PP^3=\PP(\CC[x_0,x_1,y_0,y_1])$ and the subsets 
$Z(F)$ and $Z(G)$ span two skew lines, namely the lines corresponding to
$\PP(\CC[x_0,x_1])$ and $\PP(\CC[y_0,y_1])$. We will denote these lines with
$R_F\supset Z(F)$ and $R_G\supset Z(G)$.

\begin{lem}\label{noRG} There is no line $R$ containing $d$ points of $Z(H)$ 
and intersecting $R_G$. 
\end{lem}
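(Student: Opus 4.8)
Suppose, for contradiction, that such a line $R$ exists: $R$ contains $d$ points of $Z(H)$, say (after reindexing) $[H_1],\dots,[H_d]$, and $R$ meets $R_G$ in a point $Q$. The strategy is to exploit the fact that everything lying on a single line is a binary-form configuration, so Remark \ref{binaryREM} forces a linear dependence among $d+1$ powers of the relevant linear forms, and then to ``push'' this dependence back into one of the original summands $F$ or $G$ to contradict its rank. Concretely, I would first observe that since $R$ and $R_G$ are coplanar, the plane $\Pi$ they span contains $Q$, all of $Z(G)\subset R_G$, and $[H_1],\dots,[H_d]\in R$; restricting the identity \eqref{theeq} by a differential operator cutting out $\Pi$ is the wrong move (that kills $F$ and $G$ by Lemma \ref{spanP3}'s argument), so instead I work directly on the line $R$.

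The key step: consider the $d+1$ points $Q,[H_1],\dots,[H_d]$ lying on the line $R$. By Remark \ref{binaryREM} their $d$-th Veronese images are linearly dependent, so there are scalars, not all zero, with
\[
c_0\, Q^d = c_1 H_1^d + \dots + c_d H_d^d
\]
(writing $Q^d$ for the $d$-th power of a linear form representing $Q$). If $c_0\neq 0$ we may rescale and substitute this into \eqref{theeq} to replace $H_1^d+\dots+H_d^d$ by a single $d$-th power $Q^d$ plus the remaining $H_i^d$; but $Q\in R_G=\PP(\CC[y_0,y_1])$, so $Q^d\in\CC[y_0,y_1]$, and Lemma \ref{noH} was invoked precisely to exclude $H_i$'s in that subring — more carefully, this rewriting contradicts the minimality of $t$ in Remark \ref{dline} (we have reduced $t$ by at least $d-1\geq 1$, since $s\le d-1$ means $d\ge 2$; actually we replace $d$ summands by at most $1+0=1$ of the old ones plus $Q^d$, lowering $t$). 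If $c_0=0$, then $H_1^d,\dots,H_d^d$ are themselves dependent, contradicting the minimality of $t$ directly as in Remark \ref{dline}.

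The point I expect to require the most care is ruling out the degenerate possibility that $Q$ is actually one of the $[H_i]$, or that the linear dependence among $Q^d,H_1^d,\dots,H_d^d$ is forced to have $c_0=0$ for a structural reason, so that no genuine simplification of \eqref{theeq} occurs. In that case one does not immediately contradict minimality of $t$, and one must instead argue via Hilbert functions: the scheme $Z=Z(H)\cup Z(F)\cup Z(G)$ has $h^1\ii_Z(d)>0$ by Remark \ref{h1rem}, and having $d$ of the $H_i$ on a line meeting $R_G$ creates enough collinear structure inside $Z$ that, combined with Lemma \ref{celine} applied to a suitable subconfiguration and Castelnuovo's inequality in the form of Lemma \ref{resid} (stripping off the line $R$ and its residual), one derives $h^1\ii_Z(d)=0$, the desired contradiction. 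I would set up the residuation so that $W_1$ is the collinear part on $R$ (of size at most $d\le u+1$ with $u=d$) and $W_2$ is everything else, checking that $W_2$ is separated in degree $d-1$ using Proposition \ref{Marvi}-type additivity together with Remark \ref{binaryREM} bounds on $r,s\le d$.
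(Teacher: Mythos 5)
Your central step does not work as written. From the $d+1$ distinct points $Q,[H_1],\dots,[H_d]$ on $R$ you get (via Remark \ref{binaryREM}) a single linear relation $c_0Q^d=c_1H_1^d+\dots+c_dH_d^d$; since any $d$ distinct points on a degree~$d$ rational normal curve are linearly independent, this relation is unique up to scalar and \emph{all} its coefficients are nonzero. Such a relation lets you eliminate exactly one $H_i^d$ in favour of $Q^d$ and the remaining $H_j^d$'s --- it does not let you replace the sum $H_1^d+\dots+H_d^d$ (which has all coefficients equal to $1$, while the relation has whatever coefficients it has) by the single term $Q^d$. So the number of summands does not drop by $d-1$; it does not drop at all (one $H_i$ out, $Q$ in), and your claimed contradiction with the minimality of $t$ evaporates. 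Also, your fallback case $c_0=0$ never occurs, for the independence reason just given. One could try to salvage the idea by absorbing the new term $Q^d\in\CC[y_0,y_1]$ into $G$ as in Lemma \ref{noH}, obtaining $F+G''$ with $t-1$ summands and $\rk(G'')\geq s-1$, but that produces a \emph{new} potential counterexample with a modified $G$ rather than a contradiction, and would require setting up a separate induction that neither you nor the paper's framework provides. The closing Hilbert-function sketch is too vague to count as a proof and, as you note, is essentially the argument of the main theorem, which \emph{uses} this lemma to guarantee that $R$ misses $R_G$.

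The paper's proof is a short projection argument you did not consider: project from $R_G$ onto $R_F$ by setting $y_0=y_1=0$ in \eqref{theeq}. Since $R$ meets $R_G$, this projection contracts $R$ to a single point, so the images of $H_1,\dots,H_d$ are all proportional linear forms in $x_0,x_1$, and $F$ becomes a sum of at most $t-d+1$ $d$-th powers. Then $t-d+1<r+s-d+1\leq r$ (using $s\leq d-1$), contradicting $\rk(F)=r$. You should adopt this route, or else carefully repair the substitution step along the lines indicated above.
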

\begin{proof} Assume by contradiction that such a line $R$ exists, say 
$R\supset \{H_1,\dots, H_{d}\}$. 
The projection $\pi$ from $R_G$ to $R_F$ is obtained by setting $y_0=y_1=0$. 
Since $\pi$  contracts $R$ to a point,
it follows that $F$ is generated by the $d$-th powers of the images of
$H_1$ and $H_{d+1},\dots, H_t$. So the rank of $F$ is at most  $t-d+1$.
On the other hand $t-d+1<r+s-d+1\leq r$, since we are assuming $s\leq d-1$. 
Hence we have a contradiction.
\end{proof}

We now prove SAC for two variables.

\begin{thm}\label{add2}  Let $d>1$. If $F\in\mathbb{C}[x_0,x_1]$ and $G\in\mathbb{C}[y_0, y_1]$ are non-zero degree $d$ forms, then
$$\rk(F+G)=\rk(F)+\rk(G).$$
 \end{thm}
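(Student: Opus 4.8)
The plan is to argue by contradiction, adopting all the notation and reductions already in place: we have $F=\sum_{i=1}^rF_i^d$, $G=\sum_{j=1}^sG_j^d$, and a putative shorter decomposition $F+G=\sum_{k=1}^tH_k^d$ with $t<r+s$, where (by Lemma \ref{noH}) each $H_k$ genuinely involves variables from both groups, and (by Remark \ref{step0}) $3\le s\le r\le d$; by Lemma \ref{maxcas} we may further assume $s\le d-1$. The scheme $Z=Z(H)\cup Z(F)\cup Z(G)\subset\PP^3$ then fails to impose independent conditions in degree $d$, i.e. $h^1\ii_Z(d)>0$ by Remark \ref{h1rem}. The strategy is to show this is incompatible with the structural constraints: $Z(H)$ spans $\PP^3$ (Lemma \ref{spanP3}), at most $d$ points of $Z(H)$ lie on any line (Remark \ref{dline}), no $d$-point line through $Z(H)$ meets $R_G$ (Lemma \ref{noRG}), and $Z(F),Z(G)$ lie on the two skew lines $R_F,R_G$.

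The core step is to bound $|Z|$ and invoke a line-concentration result. We have $|Z|\le t+r+s<2(r+s)\le 2(d+r)$, and more usefully $|Z(H)\cup Z(F)|=t+r$ with these points constrained by the skew-line geometry. First I would apply Lemma \ref{celine} to a suitable subconfiguration $W\subseteq Z$: choosing $u$ so that $u-2=d$ (so $u=d+2$) we would need $|W|\le 2u-2=2d+2$ and $h^1\ii_W(d)>0$ to conclude that a line carries at least $d+2$ points of $W$. Since $r,s\le d$ and $t<r+s\le 2d$, the cardinality bound $|Z|=t+r+s<4d$ is too weak directly, so the real work is to peel off points lying on $R_F$ and $R_G$ using Lemma \ref{resid}: treating $W_1=Z(G)\subset R_G$ (at most $s\le d-1\le u+1$ points) as the residual-trace splitting, one reduces $h^1\ii_Z(d)>0$ to $h^1\ii_{Z(H)\cup Z(F)}(d-1)>0$, and then similarly strip $Z(F)\subset R_F$ to land on $Z(H)$ alone in degree $d-2$, with $|Z(H)|=t<r+s\le 2d$. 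Applying Lemma \ref{celine} with $u-2=d-2$, i.e. $u=d$, requires $t\le 2d-2$ and then forces $d$ points of $Z(H)$ on a line — but that line, being forced by the residuation, must meet $R_F$ or $R_G$, contradicting Lemma \ref{noRG} (and its mirror for $R_F$, which follows symmetrically since we only reduced to $s\le d-1$, not $r\le d-1$; the case $r=d$ needs the extra monomial structure of $F$ or a direct Hilbert-function argument).

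The main obstacle I anticipate is the bookkeeping in the two successive applications of Lemma \ref{resid}: one must verify at each stage that the set being stripped off lies in a proper linear subspace disjoint from the remaining points (which holds because $R_F\cap R_G=\emptyset$ and, by Lemma \ref{noRG}, no $H_k$ lies on $R_F$ or $R_G$ once $t$ is minimal — here one should re-choose the $H_k$'s to be minimal, as in Remark \ref{dline}), and that the cardinality hypotheses $|W_1|\le u+1$ are met with the correct shifted value of $u$ at each step. A secondary subtlety is the endpoint case $r=s=d$, already handled in Lemma \ref{maxcas}, together with the asymmetric case $r=d>s$: here the residuation along $R_F$ can only be done after the one along $R_G$, and one must check the inequalities $t\le 2d-2$ still hold, using $t<r+s\le 2d-1$ hence $t\le 2d-2$ whenever $s\le d-1$. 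Once the line with $d$ points of $Z(H)$ is produced and shown to meet $R_F$ or $R_G$, Lemma \ref{noRG} closes the argument, proving $\rk(F+G)=r+s$.
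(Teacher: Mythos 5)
Your reductions are sound up to a point: the two contrapositive applications of Lemma \ref{resid} (stripping $Z(G)$ at level $u=d$ and then $Z(F)$ at level $u=d-1$, with the cardinality hypotheses $s\le d+1$ and $r\le d$ satisfied, and with $Z(H)$ disjoint from $R_F\cup R_G$ by Lemma \ref{noH}) correctly convert $h^1\ii_Z(d)>0$ into $h^1\ii_{Z(H)}(d-2)>0$, and Lemma \ref{celine} with $u=d$ (using $t\le 2d-2$ and Lemma \ref{spanP3}) then produces a line $R$ containing at least $d$ points of $Z(H)$. This is essentially the paper's Case~1 run backwards. The gap is in what you do with that line. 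You assert that $R$, ``being forced by the residuation, must meet $R_F$ or $R_G$,'' but nothing in Lemma \ref{celine} or in the residuation forces any such incidence: a line carrying $d$ points of $Z(H)$ can perfectly well be skew to both $R_F$ and $R_G$, since the collinearity of points of $Z(H)$ places no condition on $R$ meeting two fixed skew lines of $\PP^3$. Lemma \ref{noRG} therefore yields no contradiction, and the configuration ``exactly $d$ points of $Z(H)$ on a line'' (which is fully consistent with Remark \ref{dline}) is left unaddressed. Note also that even when $R$ does meet $R_F$, Lemma \ref{noRG} does not apply: its proof projects from $R_G$ and uses $s\le d-1$, and since $r=d$ is allowed there is no symmetric statement for $R_F$; your parenthetical acknowledges this but does not resolve it.

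This open case is precisely the paper's Case~2, and it is the harder half of the argument. There one sets $\Phi=R\cap R_F$, $Z'=Z(H)\setminus R$ (at most $d-2$ points), $Z''=(Z(H)\cap R)\cup\Phi$ (at most $d+1$ collinear points), and runs a four-step forward residuation: $h^1\ii_{Z'}(d-3)=0$, then adjoin $Z(G)$ at $u=d-2$, then $Z(F)\setminus\Phi$ at $u=d-1$, and finally the collinear set $Z''$ at $u=d$, using Lemma \ref{noRG} only to guarantee $R\cap R_G=\emptyset$ so that the last application of Lemma \ref{resid} is legitimate. Without this (or some other way to rule out or exploit a $d$-point line in $Z(H)$), your proof is incomplete.
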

 \begin{proof} We will use all the previous notation and we may assume what follows:
 \begin{itemize}
 \item$3\leq r\leq d$, $3\leq s\leq d-1$, and thus $d\geq 4$ (see Remark \ref{step0} 
 and Proposition \ref{maxcas}).
 \item  $Z(H)$ does not intersect $R_F\cup R_G$ (see Lemma \ref{noH}).
 \item $Z(H)$ spans $\PP^3$ (see Lemma \ref{spanP3}).
 \item  At most $d$ points of $Z(H)$ lie on a line (see Remark \ref{dline}).
 \end{itemize}
 
By contradiction assume that $t<r+s$ and thus $t\leq 2d-2$. 
We let
\[Z=Z(F)\cup Z(G)\cup Z(H)\]
and we will find the contradiction $h^1\ii_Z(d)=0$ (see Remark \ref{h1rem}).

\noindent{\bf Case 1}:  there is no line containing $d$ points of $Z(H)$. 
Lemma \ref{celine}, with $u=d$, yields that $h^1\ii_{Z(H)}(d-2)=0$.
Since $Z(G)$ has cardinality at most $d-1$, we apply Lemma \ref{resid}
with $u=d-1$, and obtain $h^1\ii_{Z(H)\cup Z(G)}(d-1)=0$. Finally, since 
$Z(F)$ has cardinality at most $d$, we may apply Lemma \ref{resid}
with $u=d$, and obtain $h^1\ii_Z(d)=0$,
a contradiction.
 
\noindent{\bf Case 2}: there are $d$ points of $Z(H)$ on a line $R$. By  Lemma \ref{noRG},
 $R$ cannot intersect $R_G$. $R$ may or may not intersect $R_F$ and, in the first case,
 the point $R\cap R_F$ may or may not lie in $Z(F)$. In any case, set
 $\Phi=R\cap R_F$ and set $Z'=Z(H)\setminus R$, $Z''=(Z(H)\cap R) \cup \Phi$.
 Notice that $Z'$ contains at most $d-2$ points and $Z''$ contains at most $d+1$ points. 
 Notice also that $Z$ is contained in $Z(F)\cup Z(G)\cup Z''\cup Z'$, with equality
 if and only if either $\Phi=\emptyset$ or $\Phi$ is not a point of $Z(F)$.
 
 Since the cardinality of $Z'$ is at most $d-2$, then $h^1\ii_{Z'}(d-3)=0$.
Because $Z(G)$ has cardinality at most $d-1$, we apply Lemma \ref{resid}
with $u=d-2$, and obtain $h^1\ii_{Z'\cup Z(G)}(d-2)=0$. 
Since $Z(F)\setminus \Phi$ has cardinality at most $d$, we may apply Lemma \ref{resid}
with $u=d-1$, and obtain $h^1\ii_{Z'\cup Z(G)\cup (Z(F)\setminus \Phi)}(d-1)=0$.
Finally, notice that $Z'\cup Z(G)\cup (Z(F)\setminus \Phi)$ cannot intersect $R$.
 Thus, since $Z''$ has cardinality at most $d+1$, we apply Lemma \ref{resid}
with $u=d$, and we obtain 
$$ h^1\ii_{Z'\cup Z(G)\cup (Z(F)\setminus \Phi)\cup Z''}(d)=0.$$
Since $Z\subseteq {Z'\cup Z(G)\cup (Z(F)\setminus \Phi)\cup Z''}$ we get the contradiction
\[h^1\ii_Z(d)\leq h^1\ii_{Z'\cup Z(G)\cup (Z(F)\setminus \Phi)\cup Z''}(d)=0.\]
   \end{proof}

Theorem \ref{mainthm} is now proved by collecting the previous results.
\begin{proof}[{Proof of Theorem \ref{mainthm}}]
The cases $n=0$ and $m=0$ follow from Proposition \ref {Marvi} and Theorem \ref{add2} yields the rest of the statement. 
\end{proof}


\begin{thebibliography}{BGM94}

\bibitem[AH95]{AH95}
J.~Alexander and A.~Hirschowitz.
\newblock Polynomial interpolation in several variables.
\newblock {\em J. Algebraic Geom.}, 4(2):201--222, 1995.

\bibitem[BB12]{BaBe2012}
E.~Ballico and A.~Bernardi.
\newblock Decomposition of homogeneous polynomials with low rank.
\newblock {\em Math. Z.}, 271(3-4):1141--1149, 2012.

\bibitem[BBS08]{Birula}
A.~Bia{\l}ynicki-Birula and A.~Schinzel.
\newblock Representations of multivariate polynomials by sums of univariate
  polynomials in linear forms.
\newblock {\em Colloq. Math.}, 112(2):201--233, 2008.

\bibitem[BGM94]{BGM}
A.~Bigatti, A.~V. Geramita, and J.~C. Migliore.
\newblock Geometric consequences of extremal behavior in a theorem of
  {M}acaulay.
\newblock {\em Trans. Amer. Math. Soc.}, 346(1):203--235, 1994.

\bibitem[Car06]{reducing}
E.~Carlini.
\newblock Reducing the number of variables of a polynomial.
\newblock In {\em Algebraic geometry and geometric modeling}, Math. Vis., pages
  237--247. Springer, Berlin, 2006.

\bibitem[CCG11]{sundials}
E.~Carlini, M.~V. Catalisano, and A.~V. Geramita.
\newblock 3-dimensional sundials.
\newblock {\em Cent. Eur. J. Math.}, 9(5):949--971, 2011.

\bibitem[CCG12]{carcatgermonomi}
E.~Carlini, M.V. Catalisano, and A.~V. Geramita.
\newblock The solution to the {W}aring problem for monomials and the sum of
  coprime monomials.
\newblock {\em J. Algebra}, 370:5--14, 2012.

\bibitem[CS11]{comasseiguer}
Gonzalo Comas and Malena Seiguer.
\newblock On the rank of a binary form.
\newblock {\em Found. Comput. Math.}, 11(1):65--78, 2011.

\bibitem[Eis95]{eisenbudbook}
D.~Eisenbud.
\newblock {\em Commutative algebra}, volume 150 of {\em Graduate Texts in
  Mathematics}.
\newblock Springer-Verlag, New York, 1995.
\newblock With a view toward algebraic geometry.

\bibitem[FW84]{winograddirectsumconj}
E.~Feig and S.~Winograd.
\newblock On the direct sum conjecture.
\newblock {\em Linear Algebra Appl.}, 63:193--219, 1984.

\bibitem[Ger96]{Ge}
A.V. Geramita.
\newblock Inverse systems of fat points: {W}aring's problem, secant varieties
  of {V}eronese varieties and parameter spaces for {G}orenstein ideals.
\newblock In {\em The Curves Seminar at Queen's, Vol.\ X (Kingston, ON, 1995)},
  volume 102 of {\em Queen's Papers in Pure and Appl. Math.}, pages 2--114.
  Queen's Univ., Kingston, ON, 1996.

\bibitem[IK99]{IaKa}
A.~Iarrobino and V.~Kanev.
\newblock {\em Power sums, {G}orenstein algebras, and determinantal loci},
  volume 1721 of {\em Lecture Notes in Mathematics}.
\newblock Springer-Verlag, Berlin, 1999.

\bibitem[Lan08]{landsbergstrassen}
J.~M. Landsberg.
\newblock Geometry and the complexity of matrix multiplication.
\newblock {\em Bull. Amer. Math. Soc. (N.S.)}, 45(2):247--284, 2008.

\bibitem[Lan12]{landsbergbook}
J.~M. Landsberg.
\newblock {\em Tensors: geometry and applications}, volume 128 of {\em Graduate
  Studies in Mathematics}.
\newblock American Mathematical Society, Providence, RI, 2012.

\bibitem[Sch81]{Sch}
A.~Sch{\"o}nhage.
\newblock Partial and total matrix multiplication.
\newblock {\em SIAM J. Comput.}, 10(3):434--455, 1981.

\bibitem[Str73]{strassenconj}
V.~Strassen.
\newblock Vermeidung von {D}ivisionen.
\newblock {\em J. Reine Angew. Math.}, 264:184--202, 1973.

\bibitem[Win71]{Winograd}
S.~Winograd.
\newblock On multiplication of {$2\times 2$} matrices.
\newblock {\em Linear Algebra and Appl.}, 4:381--388, 1971.

\end{thebibliography}
\end{document}